
\documentclass[letterpaper, 10 pt, conference]{ieeeconf}  

\IEEEoverridecommandlockouts                              

\overrideIEEEmargins                                      





\usepackage{amsfonts,bm}
\usepackage{amssymb,amsmath}
\usepackage{graphicx}
\usepackage{cite}
\usepackage{algorithm2e}
\usepackage{algpseudocode}
\RestyleAlgo{ruled}
\usepackage{ifthen}     

\renewcommand{\epsilon}{\varepsilon}
\renewcommand{\phi}{\varphi}
\renewcommand{\d}{\,\mathrm{d}}


\newtheorem{proposition}{Proposition}

\newtheorem{definition}{Definition}

\renewcommand{\epsilon}{\varepsilon}
\renewcommand{\phi}{\varphi}
\renewcommand{\d}{\,d}

\newcommand{\R}{\mathbb{R}}

\title{\LARGE \bf
Optimal control of diffusion processes:\\ $\infty$-order variational analysis and numerical solution$^*$
}

\author{Roman Chertovskih, Nikolay Pogodaev, Maxim Staritsyn, and A. Pedro Aguiar, \IEEEmembership{Member, IEEE}
\thanks{The authors acknowledge the financial support of the Foundation for Science and
Technology (FCT, Portugal) in the framework of the Associated Laboratory ARISE (LA/P/0112/2020), R\&D Unit SYSTEC (base UIDB/00147/2020 and programmatic UIDP/00147/2020 funds), and projects MLDLCOV (DSAIPA/CS/0086/2020), RELIABLE (PTDC/EEI-AUT/3522/2020). A part of the computations was carried out on the OBLIVION Supercomputer (Évora University) under FCT computational project 2022.15706.CPCA.
}
\thanks{Roman Chertovskih, Maxim Staritsyn, and A. Pedro Aguiar are with Research Center for Systems and Technologies (SYSTEC), ARISE, Department of Electrical
and Computer Engineering,
Faculdade de Engenharia, Universidade do Porto, Rua Dr. Roberto Frias, s/n 4200-465, Porto, Portugal (e-mail: roman@fe.up.pt, staritsyn@fe.up.pt, pedro.aguiar@fe.up.pt). Nikolay Pogodaev is with Dipartimento di Matematica ``Tullio Levi-Civita'' (DM), University of Padova, Via Trieste, 63 - 35121 Padova, Italy (e-mail: nickpogo@gmail.com)
}%
}

\begin{document}
\maketitle
\thispagestyle{empty}
\pagestyle{empty}
\begin{abstract}
We tackle a nonlinear optimal control problem for a stochastic differential equation in Euclidean space and its state-linear counterpart for the Fokker-Planck-Kolmogorov equation in the space of probabilities. Our approach is founded on a novel concept of local optimality surpassing Pontryagin's minimum, originally crafted for deterministic optimal ensemble control problems. A key practical outcome is a rapidly converging numerical algorithm, which proves its feasibility for problems involving Markovian and open-loop strategies.

\end{abstract}
\section{INTRODUCTION}\label{sec:I}

We explore an optimal control problem 
\begin{align*}
(SP) \quad \mathcal \min\big\{\mathcal I[\bm u] 
\colon \bm u \in \bm U\big\},
\end{align*}
where $\mathcal I[\bm u]$ is defined based on the solution $X=X[\bm u]$ of a 
nonlinear \^{I}to stochastic differential equation (SDE) 
\begin{align}
X_t = X_0 + \int_0^t f_s[\bm u](X_s)\d s + \int_0^t\sigma_s[\bm u](X_s) \d W_s\label{SDE}
\end{align}
with controlled deterministic coefficients
$f\colon I \times \R^n \times \bm U  \to \R^n$ and 
$\sigma\colon I \times \R^n \times \bm U \to \R^{(k\times n)}$. Here, $W \colon 
I \times \Omega \to \R^k$, is a standard \emph{Wiener process} with independent components $W^j$, defined on a complete probability space (p.s.) $(\Omega, \mathcal F, \mathbb P)$ with a natural filtration $t \mapsto \mathcal F_t^W$ of the sigma-algebra $\mathcal F$, and $X_0\colon 
\Omega \to \R^n$, is a 
random variable, independent of $\mathcal F_T$. 

The \emph{state trajectories} $X$ 
are random processes 
$I \times \Omega \to \R^n$ modulated by \emph{control functions} $\bm u$.
We assume that $X$ are progressively measurable with respect to (w.r.t.) the sigma-algebras $\mathcal F_t^{W,X_0}$ generated by random variables $W_t$ and $X_0$. 


 Our objective functional reads
\[
\mathcal I[\bm u]= \mathbb E \, \left[\ell (X_T[\bm u])+ \int_{I} R_s[\bm u](X_s[\bm u]) \d s\right],
\]
where $\ell \colon \R^n \to \R$ and $R\colon I \times \R^n \times \bm U \to \R$ are given 
\emph{terminal} and \emph{running costs}, and $\mathbb E$ is the $\mathbb P$-expectation. 

A class $\bm U$ of control inputs, which is pertinent to the problem $(SP)$, consists of random processes $\bm u\colon 
I \times \Omega \to U$, defined on the same p.s., adapted to $\mathcal F^{W, X_0}$, and valued in a certain set $U\subseteq \R^m$. However, the  practice of control engineering usually leans towards more ``realizable'' options, whose choice is shaped by the nature and quality of online information available to the decision-maker. We investigate two  ``extreme'' scenarios: 

1) The state $X_t$ is fully observable throughout the entire interval $I$, and the guide has technical options to adjust the control strategy accordingly. In this case, $\bm u$ takes the form of so-called \emph{Markovian strategy} $\bm u_t(\omega) = w_t(X_t(\omega))$ given by a 
map $w\colon I \times \R^n \to U$. 

2) No observation/intervention is possible during the actual execution of the control process. The guide is expected to pre-define an \emph{open-loop} control strategy $\bm u_t(\omega) \equiv u(t)$, $u \colon I \to U$, relying on the knowledge of the model and insights gained from preliminary experiments. 

Section~\ref{sec:Markov} focuses on the first option, which is canonical for the stochastic context. The second choice, although somewhat unconventional, finds motivation in different applications where one searches for a ``robust'' or ``broadcast'' control signal that compensates for noise or uncertainty. Notably, the related control and optimization problems, termed (optimal) \emph{ensemble control}, prove to be significantly more intricate compared to those involving Markovian strategies. Section~\ref{sec:robust} delves into certain aspects of this intricacy.

\subsection{Contribution and Novelty}

In the realm of optimal control for diffusion processes, three mainstream frameworks basically stand out: Stochastic Pontryagin's Maximum Principle (SPMP) \cite{BENSOUSSAN1983387,Peng,Hu2021,Pham2021}, Distributed Pontryagin's Principle \cite{WeinanJiequn,Siska,Annunziato2013,Roy2016,Roy2018,Breitenbach2020,Fleig2016,Fleig2017,Borzi2011,Anita2021} assuming a shift to an optimal control of a Fokker-Planck-Kolmogorov equation (FP-PMP), and classical Dynamic Programming (DP) \cite{krylov2008controlled,Annunziato2014,Pham2021,yong1999stochastic} (the bibliography is vast, we only mention a few). 

All these approaches pose significant challenges in numerical contexts, addressed, e.g., by \cite{Roy2018,Roy2016,Breitenbach2020,Annunziato2013, Annunziato2014,Borzi2011,Sinigaglia2021OptimalCO}: 
SPMP deals with a sophisticated system of coupled forward-backward SDEs (FBSDEs) \cite{ma1999forward}, prompting the search for indirect algorithms. The latter two demand a numerical solution to (respectively, linear Fokker-Planck and nonlinear Hamilton-Jacobi) partial differential equations (PDEs), a persistent issue practically viable only for small dimensions. 

In general, Pontryagin's principle (PMP) gives rise to standard indirect algorithms reminiscent of the conventional gradient descent, which incorporate internal ``step'' parameters subjected to line search through specific backtracking, see, e.g. \cite{Breitenbach2020,AMO}. Typically, such algorithms take numerous 
iterations to achieve a satisfactory approximation of a \emph{local} solution. DP yields a \emph{global} solution but, in practice, it is applicable exclusively to deterministic initial states: when $X_0$ is uncertain, the corresponding Hamilton-Jacobi equation is formulated on the space of measures, even if $\sigma \equiv 0$ \cite{MarigQuin2018}.

In this work, we promote an alternative approach that falls somewhere between FP-PMP and DP. In line with the former, a numerical method derived from our approach involves solving a linear parabolic PDE or extracting statistics through Monte Carlo as pivotal steps in each iteration. Nevertheless, the method is devoid of free intrinsic parameters and shows noticeably fast convergence. This leads to a substantial reduction in computation cost for a local solution, yet understood in a \emph{stronger} sense than that inherent to PMP (see Sec.~\ref{sec:geometric}).

 Recently, our approach showcased its efficacy in numerical optimization involving random ordinary differential equations (ODEs) 
 \cite{SChPP-2022, SPP-2023, CPSA2023}. The ongoing extension to the stochastic framework, further complicated by the ``arrow of time'' burden, remained a natural and intriguing challenge. {To our knowledge, this work is pioneering on this line}.

\subsection{Notations, Standing Assumptions, and Preliminaries}\label{sec-hypo}

We use the following \underline{notations}: $\R^n$ is the $n$-dimensional Euclidean space with a fixed norm $|\cdot|$, and the corresponding matrix norm is denoted by the same symbol; $\mathcal L^n$ stands for the Lebesgue measure on $\mathbb R^n$. 

$C(\mathcal X; \mathcal Y)$ denotes the space of continuous functions between normed spaces $\mathcal X$ and $\mathcal Y$, $C(\mathcal X)\doteq C(\mathcal X; \R)$, and $\|\cdot\|_\infty$ is the natural $\sup$-norm on $C$. 

$C^k$ is the space of $k$ times continuously differentiable functions, and $C^\infty_c$ the class of smooth and compactly supported functions between the corresponding sets. $C^{1,2}$ stands for the class of functions $(t, x) \mapsto \eta_t(x)$, which are $C^1$ 
w.r.t. $t$ and $C^2$ w.r.t. $x$. 

$L_{\mathrm p}$, $\rm p \geq 1$, are Lebesgue quotient spaces endowed with the corresponding norms $\|\cdot\|_{L_{\rm p}}$. 

For $A \subseteq \R^n$ and $\rm p\geq 1$, the class $W^{1,2}_{\rm p}(I \times A)$ is introduced 
as a completion of $C^\infty_c(I \times A)$ w.r.t. the norm \(\|\phi\|_{W^{1,2}_{\rm p}} \doteq \|\phi\|_{L_{\rm p}}+ \|\partial_t\phi\|_{L_{\rm p}} + \|\nabla_x\phi\|_{L_{\rm p}}+ \|\nabla^2_{xx}\phi\|_{L_{\rm p}}.\) 

Given a measure space $(\nu, \mathcal X, \mathcal B)$, we denote by $\mathbb E^{\nu} \phi$  the expectation $\langle \nu, \phi \rangle$ of a $\nu$-integrable function $\phi$ w.r.t. $\nu$, and abbreviate $\mathbb E \doteq \mathbb E^{\mathbb P}$. 

$\mathcal P(\mathcal X)$ stands for the collection of probability measures on $\mathcal X$, and $\mathcal P_c(\mathcal X)$ for the set of probability measures with a compact support in $\mathcal X$.

For a Borel measurable function $F\colon \mathcal X \to \mathcal Y$ between metric spaces, the pushforward of a measure $\nu \in \mathcal P(\mathcal X)$ through $F$, $F_{\sharp} \colon \mathcal P(\mathcal X) \to \mathcal P(\mathcal Y)$, is introduced via the action on functions $\phi \in L^1_{F_\sharp \nu}(\mathcal Y)$ as $\mathbb  E^{F_\sharp \nu} \phi \doteq \mathbb E^\nu [\phi\circ F]$.


We designate a compact and convex set $U \subset \R^m$ to represent the feasible range of control actions. The classes $\bm U_M$ and $\bm U_r$ of admissible Markovian and open-loop (robust) \underline{control strategies} are composed of Borel measurable functions $I \times \R^n \to U$ and $I \to U$, respectively. We endow these sets with the weak* topologies of the corresponding dual Banach spaces $L_\infty(I\times \R^n;\R^m)$ and $L_\infty(I;\R^m)$, provided by the duality $(L_1)^* =L_\infty$. Note that any $u \in \bm U_r$ can be identified with an element $w \in \bm U_M$ such that $w_t(x) \equiv u(t)$. 


Given 
a function $\mathrm g\colon I \times \R^n \times U  \to \mathcal X$ to some set $\mathcal X$, we define an operator $g\colon I \times \R^n \times \bm U_M  \to \mathcal X$ via the relation $g_t[w](x) \doteq \mathrm g(t, x, w_t(x))$. This operator is instrumental in formulating the data $(R, f, \sigma)$ for the problem $(SP)$, using functions $(\rm R, \rm f, \varsigma)$ that adhere to one of the following sets of (rather excessive)  \underline{assumptions}:
\begin{description}
    \item[$(A_{M})$] ${\rm f} \colon I \times \R^n  \times U \to \R^n$ and ${\rm R} \colon I \times \R^n  \times U \to \R$ are bounded and Borel measurable, and $\varsigma \equiv \sqrt{2\beta} E_n$, where $E_n$ is the unit matrix $(n \times n)$ and $\beta>0$. 
    
    \item[$(A_{r})$] ${\rm f}$ is as above; $\varsigma\colon I \times \R^n  \times U \to \R^{k\times n}$ and ${\rm R}\colon I \times \R^n  \times U \to \R$ 
    are continuous and Lipschitz in the second variable uniformly w.r.t. the other variables. 


    


\end{description}
In addition, we assume that $\mathbb E\big[|X_0|^2\big] < \infty$, while $\ell$ is in $C^2(\R^n)$ and satisfies the quadratic growth condition: 
\(
    \left|\ell(x)\right| \leq {\rm C}\left(1 + |x|^2\right)\) \(\forall (t, x) \in I \times \R^n
\) for some $\rm C>0$.


We complete this section by presenting some necessary \underline{preliminary facts} from stochastic analysis and stochastic control: Recall that a process $X$ is termed a \emph{strong solution} to the SDE \eqref{SDE}, under some control $\bm u$, if the probability structure $(\Omega, \mathcal F, \mathbb P, W, \mathcal F_t^W)$ is \emph{predefined}, $\mathbb E[(|f[\bm u]|+|\sigma[\bm u]|^2)\circ X] \in L_1(I; \R)$, and the identity \eqref{SDE} holds with $\mathbb P$-probability one (
almost surely, a.s.). A strong solution is a.s. continuous process. It is deemed \emph{strongly unique} if, for any two solutions $X'$ and $X''$, it holds that $\mathbb P(\|X' - X''\|_\infty >0) = 0$. 

Assumptions $(A_r)$ ensure the existence of a strongly unique strong solution for any $\bm u = u \in \bm U_r$ \cite[Thm.~1.3.15]{Pham2021}. This result is foundational in the theory of SDEs and has been extended to the case of Borel measurable drift by \cite[Thm.~1]{Veretennikov1981}. Notably, the conditions $(A_M)$ are sufficient to establish the discussed property for all $\bm u = w \in \bm U_M$. 

In what follows, we shall exclusively deal with strong solutions and omit the epithet ``strong'' for brevity.\footnote{In stochastic analysis, there exists another type of solution to an SDE known as \emph{weak} solution, which incorporates the underlying probability structure as part of the unknown. The existence of a (weakly unique) weak solution is proven under milder assumptions. However, the technical arguments presented in Section~\ref{sec:main} 
are not applicable to weak solutions.}

 Let $t \in I$, $\bm u \in \bm U_M$, and consider the (linear, unbounded) second-order differential operator $L_t[\bm u]\colon \mathcal D(L) \to C(\R^n)$
\begin{align}
\begin{array}{c}
   L_t[\bm u] \, \phi \doteq \nabla_x \phi \cdot f_t[\bm u] + {\rm Tr}\left(\nabla^2_{xx} \phi \,  D_t[\bm u]\right),   
\end{array}
\label{L}
\end{align} with a dense domain $\mathcal D(L) \doteq C^{2}(\R^n)$, independent of $t$ and $\bm u$. Here, ${\rm Tr}$ denotes the trace of a matrix, and $D \doteq \frac{1}{2} \sigma^{\rm T} \sigma$. Given a solution $X=X[\bm u]$ to \eqref{SDE}, classical \emph{\^{I}to's lemma} says that, for any $\eta \in C^{1,2}(I \times \R^n)$, the composition $Y = \eta\circ X$ is also an \^{I}to process satisfying
\begin{align}
    Y_t = \eta_0(X_0) + & \int_0^t \left\{\partial_s + L_s[\bm u]\right\} \eta(X_s) \d s 
     + M_t, \label{FIto}
\end{align}
where $\displaystyle M_t \doteq  M_t[\bm u] = \int_0^t \nabla_x \eta_s (X_s)^{\rm T} \, \sigma_s[\bm u](X_s) \d W_s$ is a martingale (in particular, $M_0=0$ implies $\mathbb E M =0$ on $I$).

The result remains valid for $\eta \in W^{1,2}_{\rm p}(I \times A)$, where $A \subset \R^n$ is a bounded domain, and ${\rm p}> n+2$ \cite[Thm.~3]{Zvonkin1974}.

Another useful fact is a version \cite[Remark 3.5.5]{Pham2021} of the \emph{Feynman-Kac formula} \cite[Thm.~8.2.1]{Oksendal2010stochastic}: given $u \in \bm U_r$  and  $q \in C(I \times \R^n \times \bm U_r)$, suppose that $p \in C^{1,2}([0,T)\times \R^n)\cap C(I \times \R^n)$ satisfies the quadratic growth condition, 
and solves the inhomogeneous backward Cauchy problem 
\begin{align}
\left\{\partial_s + L_s[\bm u]\right\}p = q_s[\bm u]; \quad p_T=\ell,\label{PDE-back}
\end{align}
where the first relation holds for almost all (a.a.) $s \in I$. Then, $p$ admits the probabilistic representation 
\begin{align}
p_t = \mathbb \ell(X^x_{t, T}) - \int_t^T q_s[\bm u](X^x_{s, T})\d s,\label{FK}
\end{align}
where $t \mapsto X_{s,t}^x$ is a solution of the SDE \eqref{SDE} on the interval $[s, T]$, $s \in [0, T)$, with a deterministic initial condition $X_{s,s}(x)=x \in \R^n$. We recommend \cite[Sec.~1.3.3]{Pham2021} for a brief overview of sufficient conditions for the existence of a $C^{1,2}$ solution to \eqref{PDE-back}, which is applicable to the case $\bm U=\bm U_{r}$.

In \cite[Thm.~1]{Zvonkin1974}, under the assumption that $q \equiv 0$, and $\ell$ is a ``slowly growing'' function belonging to $W^{2}_{\rm p}(A)$ for ${\rm p} > \frac{n+2}{2}$ and any bounded $A \subset \R^n$, it is shown that \eqref{FK} is a unique solution of \eqref{PDE-back}, which is in 
$W^{1,2}_{\mathrm p}(I \times A)$ for each bounded $A \subset \R^n$. Furthermore, the mapping $(t, x) \mapsto \nabla_x p_t(x)$ is uniformly \emph{continuous} (see also \cite[Thm. 3']{Veretennikov1981}). These last two facts are crucial for our analysis in the case $\bm U=\bm U_{M}$.

The final point to emphasize is the transformation of the nonlinear stochastic problem into a state-linear and deterministic one. This reformulation involves an abstract PDE, specifically, the Fokker-Planck-Kolmogorov (FPK) equation in the space of probability measures. Let us discuss this aspect in a separate section.

\subsection{Fokker-Plank Control Framework}

Let $\bm U$ be one of the classes $\bm U_M$ or $\bm U_r$.  The corresponding problem $(SP)$ is equivalent \cite{Anita2021} to a deterministic optimization of the linear form 
\begin{align*}
(DP) \ \ \  \min\left\{\mathbb E^{\mu_T[\bm u]}\ell+ \int_I \mathbb E^{\mu_s[\bm u]} R_s[\bm u] \d s\colon \bm u \in \bm U\right\}
\end{align*}
over 
measure-valued functions
$\mu=\mu[\bm u]\in C(I;\mathcal P(\R^n))\), representing the dynamics \(t\mapsto \mu_t\) of the distribution law 
\begin{equation}
    \mu_t \doteq (X_t)_{\sharp} \mathbb P\label{mu}
\end{equation}
of $X_t$. It is a simple conjecture of \^{I}to's lemma that $\mu$ satisfies the forward Cauchy system for a linear PDE 
\begin{align}
\partial_t \mu = L_t^*[\bm u]\, \mu, \quad \mu_0=\vartheta \doteq (X_0)_\sharp \mathbb P,\label{PDE}
\end{align}
where the operator $L^*$ is a formal adjoint of $L$. 

Equation \eqref{PDE} is understood in the sense of distributions: for any $\phi \in C^\infty_c(\R^n)$ and $s, t \in I$, it holds
\[
 \mathbb E^{(\mu_t - \mu_\tau)} \phi = \int_\tau^t \mathbb E^{\mu_s} L_s[\bm u] \phi \d s,  
\]
and the initial condition means that $\lim\limits_{t \to 0}\mu_t = \vartheta$ in the corresponding weak* topology. We refer to \cite[\S~9]{bogachev2015fokker} for sufficient conditions of the uniqueness of the measure-valued solution \eqref{mu} to \eqref{PDE}. 

Note that, if $\mu_t$, $t \in I$, are absolutely continuous w.r.t. $\mathcal L^n$, i.e., $\mu_t = \rho_t \mathcal L^n$ with $\rho_t \in L_1(\R^n;\R)$, then  $\rho\colon t \to \rho_t$ is a \emph{weak solution} \cite[Def. 9.2]{bressan2013lecture} of the parabolic PDE
\begin{equation}
    \partial_t \rho = L_t^*[\bm u_t] \rho, \quad \rho_t\big|_{t=0} = \rho_0,
\end{equation}
and this solution is (weakly) unique under relatively weak assumptions \cite{ROCKNER2010435}. 

Some results on the existence of a solution to the optimization problems $(SP)$ and $(DP)$ in the class $\bm U_M$ can be found in \cite[Thm. 6.3]{FlemingRishel} and  \cite[Thm.~4]{Anita2021}.  Under the assumptions $(A_r)$, the existence of a minimizer in the class $\bm U_r$ can be established by the classical continuity-compactness argument, using standard moment estimates \cite[Theorem~1.3.16]{Pham2021} and the Banach-Alaoglu theorem. 

Finally, remark that Markovian strategies, which incorporate a ``feedback'' feature in terms of the SDE \eqref{SDE}, translate to ``open-loop'' controls in the context of the PDE \eqref{PDE}. In the latter setting, the actual feedback mechanism would involve the dependence of $\bm u$ on the measure $\mu$ or its density $\rho$.

\section{$\infty$-ORDER VARIATIONAL ANALYSIS}\label{sec:main}

To be short, we first focus on the Mayer-type functional and uncontrolled diffusion, assuming that $R \equiv 0$ and $\sigma$ is independent of $\bm u$. A brief discussion of the general nonlinear Bolza problem is provided in Section~\ref{sec:C-gen}.

For the rest of the paper, we adhere to a given \emph{reference} control $\bar{\bm u} \in \bm U \in \{\bm U_M, \bm U_r\}$. 
Our preliminary task is to derive a suitable representation for the increment 
\(\Delta \mathcal I \doteq \mathcal I[\bm u] - \mathcal I[\bar{\bm u}]\) 
of the objective functional concerning another control $\bm u \in \bm U$. In the optimization framework, $\bm u$ takes the role of the unknown \emph{target} strategy to be devised for orchestrating the $\mathcal I$-descent from $\bar{\bm u}$. 

To streamline the notation, we use a bar to indicate the dependence on $\bar{\bm u}$ and omit mentioning $\bm u$, e.g., $\bar X\doteq X[\bar{\bm u}]$, and $X\doteq X[\bm u]$.

Our reasoning hinges on a simple yet non-standard class of needle-shaped variations: for any $s \in I$, we construct a new control $\bm u \diamond_s \bar{\bm u} \in \bm U$ as 
\begin{equation}
t \mapsto  (\bm u \diamond_s \bar{\bm u})_t \doteq \left\{
\begin{array}{ll}
\bm u_t, & t \in [0,s)\\
\bar{\bm u}_t, & t \in [s,T],
\end{array}
\right.
\label{control-var-gen}    
\end{equation}
and denote
\begin{equation}
\gamma_s \doteq X_{T}[\bm u \diamond_s \bar{\bm u}] = \bar X_{s,T}^{X_{s}}.\label{gamma}
\end{equation}
In view of the moment estimates \cite[Theorem~1.3.16]{Pham2021} and the martingale property, it is evident that the map $s \mapsto \mathbb E\ell(\gamma_s)$ is \emph{Lipschitz continuous} on $I$. Furthermore, by construction,
\begin{equation}
\Delta \mathcal I \doteq \mathcal I[\bm u] - \mathcal I[\bar{\bm u}] = \mathbb E \ell(\gamma_s)\Big|_{s=0}^{s=T} = \int_I \frac{d}{ds}\mathbb E\ell(\gamma_s) \d s.\label{incr-pre}
\end{equation}
Now, let $\bar p$ be a solution to 
\eqref{PDE-back} with $\bm u =\bar{\bm u}$ and $q \equiv 0$. 
By the (generalized) Feynman-Kac formula, 
\[
\bar p_t(x) \doteq \mathbb E \ell(\bar X_{t, T}^x) \doteq \mathbb E\left[\ell(\gamma_s)\,|\,X_{0,t}=x\right]. 
\]
Applying the law of total expectation, we can express
\begin{equation}
\mathbb E \bar p_s(X_{s}) \doteq \mathbb E \mathbb E\left[\ell(\gamma_s)\,|\,X_{s}\right] = \mathbb E\ell(\gamma_s).\label{EE}
\end{equation}
Then, utilizing the (generalized) \^{I}to's formula \eqref{FIto}, the stochastic process $s\mapsto \bar p_s(X_{s})$ is demonstrated to satisfy 
\[
\bar p_t(X_{t}) = \bar p_0(X_{0}) + \int_0^t \left\{\partial_s + L_s[\bm u]\right\}\bar p\circ X_s \d s + M_t,
\]
where $M$ is defined as in Section~\ref{sec-hypo}. By taking $\mathbb E$, applying \eqref{EE},  using Fubini's theorem, and differentiating 
w.r.t. $s$, we compute, for a.a. $s \in I$:
\begin{align*}
\frac{d}{ds}\mathbb E\ell(\gamma_s)&=\mathbb E\big[\left\{\partial_s + L_s[\bm u]\right\}\bar p\circ X_s\big]\\ 
 & = \mathbb E\big[\left\{L_s[\bm u] - L_s[\bar{\bm u}]\right\}\bar p_s\circ X_s\big]\\
 & = \mathbb E\big[\nabla_x \bar p_s(X_s) \cdot \left(f_s[\bm u] - f_s[\bar{\bm u}]\right)(X_s)\big].
\end{align*}
The difference under the sign of expectation shortly writes 
\[
\bar H_s[\bm u]\left(X_{s}\right) - \bar H_s[\bm u]\left(X_{s}\right),
\]
where
\(
\bar H_s[\bm u](x) \doteq H_s[\bm u](x, \nabla_x \bar p_s(x))
\)
is a contraction to $\psi = \nabla_x  \bar p_s(x)$ of the Hamilton-Pontryagin functional
\[
H_s[\bm u](x, \psi) \doteq \psi \cdot f_s[\bm u](x).
\] 
Substituting the resulting expression into \eqref{incr-pre}, we arrive at the desired representation:
\begin{align}
\Delta \mathcal I = \int_I\mathbb E[\bar H_s[\bm u]\left(X_{s}\right) - \bar H_s[\bar{\bm u}]\left(X_{s}\right)] \d s,\label{increment-X}
\end{align}
which can be reformulated in terms of the laws $\mu_t$ of $X_t$ as
\begin{align}
\Delta \mathcal I & =  \int_I\mathbb E^{\mu_s}\left[\bar H_s[\bm u] - \bar H_s[\bar{\bm u}]\right] \d s.
\label{increment-mu}
\end{align}

Two noteworthy features of the expressions \eqref{increment-X} and \eqref{increment-mu} should be highlighted: i) they are applicable to any pair $(\bar{\bm u}, \bm u)$ of inputs, without any proximity constraints, and ii) they are explicit and \emph{exact}, i.e.,  devoid of any residual terms to be neglected. In parallel with $1^\text{st}$- and $2^\textbf{nd}$-order variations arising from Taylor's expansion of $\mathcal  I$, these formulas can be regarded as $\infty$-order variations of the cost functional at $\bar{\bm u}$, justifying the terminology ``$\infty$-order variational analysis''.  

Similar to the mentioned finite-order variations, our increment formulas provide a characterization of the optimality of the reference control via a particular pointwise optimization problem, detailed below. It should be stressed, however, that utilizing \eqref{increment-X} and \eqref{increment-mu} for optimization purposes implies operating with a specific \emph{feedback mechanism}, as $X$ and $\mu$ correspond to the target control $\bm u$.

\subsection{Markovian Controls}\label{sec:Markov}

We start with the class $\bm U_M$, and adopt hypotheses $(A_M)$. By \(\bar {\rm H}_s\) we denote a contraction of the usual Hamiltonian 
\({\rm H}_s(x, \psi, \upsilon)=\psi \cdot {\rm f}_s \left(x, \upsilon\right)\)
to $\psi = \nabla_x \bar p_s(x)$. In this notation, $\bar H_s[w](x) \doteq  \bar{\rm H}_s(x, w_s(x))$. (In the pseudo-codes below, ${\rm H}^k$ stands  for $\bar {\rm H}$ with $\bar p=p^k$.)

Given \eqref{increment-X}, the following assertion is straightforward.
\begin{proposition}\label{NOC-M}
Assume that $\bar{\bm u} \doteq \bar  w$ is optimal for $(SP)$ within the class $\bm U_\text{M}$. Then, the relation 
\begin{equation}
\mathbb E\bar H_t[w]\left(X_{t}[w]\right) = \mathbb E\bar H_t[\bar w_t]\left(X_{t}[w]\right)\mbox{ for a.a. }t \in I\label{FNOC-w}
\end{equation}
holds for any Borel measurable solution $\bm u \doteq w \in \bm U_\text{M}$ to the mathematical programming problem\footnote{The existence of a Borel measurable function satisfying \eqref{w-min} is a trivial consequence of the continuity of $\bar{\rm H}$, see \cite[Theorems~8.2.11 and 8.1.3]{aubinSetvaluedAnalysis2009}.}
\begin{align}
\bar H_t[w](x) = \min_{\upsilon \in U} \bar {\rm H}_t\left(x, \upsilon\right) \mbox{ for a.a. }t\in I, \ \forall \, x \in \R^n.
\label{w-min}   
\end{align}
\end{proposition}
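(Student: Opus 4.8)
The plan is to argue by contradiction, exploiting the exact increment formula \eqref{increment-X} in the form that already presupposes the target control is fed into the state process. Suppose $\bar w$ is optimal but there is a Borel measurable $w \in \bm U_M$ solving the pointwise problem \eqref{w-min} for which \eqref{FNOC-w} fails. By \eqref{w-min}, for every $(t,x)$ we have $\bar H_t[w](x) = \min_{\upsilon\in U}\bar{\rm H}_t(x,\upsilon) \le \bar{\rm H}_t(x,\bar w_t(x)) = \bar H_t[\bar w_t](x)$, hence the integrand
\[
\mathbb E\big[\bar H_s[w](X_s[w]) - \bar H_s[\bar w_s](X_s[w])\big] \le 0
\]
for almost all $s \in I$, since the inequality holds pointwise in $x$ and is then preserved under the expectation against the law of $X_s[w]$. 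If \eqref{FNOC-w} fails, this integrand is strictly negative on a set of positive Lebesgue measure in $I$ (the expectation of a nonpositive random variable that is not a.s.\ zero is strictly negative), so by \eqref{increment-X},
\[
\Delta\mathcal I = \mathcal I[w] - \mathcal I[\bar w] = \int_I \mathbb E\big[\bar H_s[w](X_s[w]) - \bar H_s[\bar w_s](X_s[w])\big]\,\d s < 0,
\]
contradicting the optimality of $\bar w$. Therefore \eqref{FNOC-w} must hold for every such $w$.

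The key steps, in order, are: (i) invoke the footnoted measurable-selection fact so that \eqref{w-min} has at least one admissible solution $w$ and the class of such $w$ is well defined; (ii) establish the pointwise inequality $\bar H_t[w](x) \le \bar H_t[\bar w_t](x)$ directly from \eqref{w-min}, using that $\bar w_t(x) \in U$ is a feasible competitor in the minimization; (iii) integrate this inequality against $\mu_s[w] = (X_s[w])_\sharp\mathbb P$ to get that the integrand in \eqref{increment-X} is $\le 0$ a.e.; (iv) combine with \eqref{increment-X} and the assumed optimality to conclude the integrand vanishes a.e., which is exactly \eqref{FNOC-w}.

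The main subtlety — not really an obstacle, but the point that needs care — is the bookkeeping around which control drives which trajectory: the increment formula \eqref{increment-X} is exact precisely because $\bar p$ is built from $\bar{\bm u}=\bar w$ while $X_s = X_s[w]$ is the state under the \emph{target} control $w$; one must be careful that \eqref{w-min} is a condition on $\bar H = H[\,\cdot\,]$ with the fixed adjoint $\nabla_x\bar p_s$, so the pointwise minimality transfers verbatim into the integrand of \eqref{increment-X}. A second minor point is the validity of \eqref{increment-X} for the particular $w$ at hand: one should note that, under $(A_M)$, every $w \in \bm U_M$ (in particular any Borel measurable selector of \eqref{w-min}) yields a strongly unique strong solution, so the derivation of \eqref{increment-X} in Section~\ref{sec:main} applies, and the moment/martingale estimates make $s \mapsto \mathbb E\ell(\gamma_s)$ Lipschitz, guaranteeing the integral representation. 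Beyond that the argument is the elementary "nonpositive integrand with negative integral forces optimality violation" contradiction, so no genuine difficulty remains.
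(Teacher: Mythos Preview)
Your proof is correct and follows essentially the same approach as the paper: plug any $w$ satisfying \eqref{w-min} into the exact increment formula \eqref{increment-X}, observe that the integrand is nonpositive, use optimality of $\bar w$ to force $\Delta\mathcal I=0$, and conclude that the integrand vanishes a.e.\ in $t$. The paper states this directly rather than by contradiction, but the logic is identical.
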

\begin{proof} Using any control $\bm u = w$ from \eqref{w-min} in \eqref{increment-X}, we have: $\Delta \mathcal I \leq 0$.  The optimality of $\bar w$ thus implies $\Delta \mathcal I=0$, and the assertion follows from non-positivity of the integrand. \end{proof}

This result marks a conceptual difference with the PMP: Proposition~\ref{NOC-M} involves not only the tested control $\bar w$ but also an additional \emph{comparison} control $w$, derived from $\bar w$. This feature complicates the use of Proposition~\ref{NOC-M} per se. However, construction \eqref{w-min}
gives rise to Algorithm~\ref{alg1}, which is \emph{free of intrinsic parameters}, while generating a sequence $\{w^k\}$ with a monotonicity property:  $\mathcal I[w^{k+1}] \leq \mathcal I[w^k] \doteq \mathcal I^k$.  
\begin{algorithm}
\caption{Optimal Markovian control}
\label{alg1}
\KwData{$\bar w \in \bm U_M$ (initial guess), $\varepsilon>0$ (tolerance)}
\KwResult{$\{w^k\}_{k \geq 0} \subset \bm U_M$ such that $I[w^{k+1}] < I[w^{k}]$}
$k \gets 0$;
$w^0 \gets \bar w$\;
\Repeat{$I[w^{k-1}] - I[w^{k}] < \varepsilon$}{
$p^{k} \gets p[w^k]$; $w\in \arg\min {\rm H}^k_s$\;
$w^{k+1} \gets w$; 
$k \gets k+1$\;
  }
\end{algorithm}

Denote by $\mathcal E[\bar w, w]$ the negative right-hand side of \eqref{increment-X}. The map $(\bar w, w) \mapsto \mathcal E[\bar w, w]$ defines a non-negative functional $\mathcal E\colon \bm U_M \times \bm U_M \to \R$, which measures the violation of the condition \eqref{FNOC-w}. Observing that $\{\mathcal I^k\}$ is bounded, and therefore, $\mathcal E[w^{2k}, w^{2k+1}] \doteq \mathcal I^{2k}- \mathcal I^{2k+1} \to 0$,  
we can apply the Tychonoff and   Banach-Alaoglu theorems to conclude that $\{(w^{2k}, w^{2k+1})\}$ converges, up to a subsequence, to a pair satisfying \eqref{FNOC-w}.

\subsection{Open-Loop Controls}\label{sec:robust}

Now, we turn to the class $\bm U_r$ of robust control strategies. Though this choice implies significantly reduced controllability compared to $\bm U_M$, its practical implementation is much simpler. Moreover, as discussed in \cite{Brockett2007,Breitenbach2020}, open-loop controls can serve as reasonable approximations of Markovian strategies. A numerical verification of this thesis is conducted by \cite{Breitenbach2020} using an affine-bilinear approximation.

A narrative akin to the previous paragraph can be extended to the law dynamics $t \mapsto \mu_t$. Assuming $(A_r)$, the formula \eqref{increment-mu} becomes instrumental in assessing the optimality of $\bar{\bm u} = \bar{u} \in \bm U_\text{r}$ proved similar to Proposition~\ref{NOC-M}.
\begin{proposition}
Assume that the pair $(u, \mu)$, $u \in \bm U_r$, $\mu=\mu[u]$, satisfies the relation
\begin{align}
 \mathbb E^{\mu_t}\,\bar {\rm H}_t\left(u_{t}\right) =\min_{\upsilon \in U} \mathbb E^{\mu_t}\,\bar {\rm H}_t\left(\upsilon\right)\mbox{ for a.a. }t \in I.\label{u-min} 
\end{align}
If $\bar u$ is optimal, then \eqref{u-min} also holds for $u=\bar u$.
\end{proposition}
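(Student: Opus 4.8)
The plan is to mirror the proof of Proposition~\ref{NOC-M} almost verbatim, substituting the measure-theoretic increment formula \eqref{increment-mu} for its pathwise counterpart \eqref{increment-X}. First I would take the candidate $u$ satisfying \eqref{u-min}, with its associated law $\mu=\mu[u]$, and plug the pair into \eqref{increment-mu}: this gives
\[
\Delta\mathcal I = \mathcal I[u]-\mathcal I[\bar u] = \int_I \mathbb E^{\mu_s}\big[\bar H_s[u]-\bar H_s[\bar u]\big]\d s.
\]
Since $\bar H_s[u](x) = \bar{\mathrm H}_s(x,u_s)$ and similarly for $\bar u$, and since $\bar u_s \in U$ for a.a.\ $s$, the pointwise minimality in \eqref{u-min} yields $\mathbb E^{\mu_s}\bar{\mathrm H}_s(u_s) \le \mathbb E^{\mu_s}\bar{\mathrm H}_s(\bar u_s)$ for a.a.\ $s$, hence the integrand is non-positive and $\Delta\mathcal I \le 0$. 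Optimality of $\bar u$ forces $\mathcal I[u] = \mathcal I[\bar u]$, i.e.\ $\Delta\mathcal I = 0$, and non-positivity of the integrand then forces $\mathbb E^{\mu_s}\bar{\mathrm H}_s(u_s) = \mathbb E^{\mu_s}\bar{\mathrm H}_s(\bar u_s)$ for a.a.\ $s$. But the right-hand side is exactly $\min_{\upsilon\in U}\mathbb E^{\mu_s}\bar{\mathrm H}_s(\upsilon)$, so $u=\bar u$ also attains the minimum in \eqref{u-min} against the \emph{same} measure $\mu=\mu[u]$. This is the assertion.

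One subtlety worth spelling out: the measure appearing in \eqref{u-min} when we assert it ``holds for $u=\bar u$'' must be read as $\mu[u]$, the law generated by the comparison control $u$, not $\mu[\bar u]$ — exactly the ``feedback mechanism'' caveat flagged after \eqref{increment-mu}. So the conclusion is the equality $\mathbb E^{\mu_t[u]}\bar{\mathrm H}_t(\bar u_t) = \min_{\upsilon}\mathbb E^{\mu_t[u]}\bar{\mathrm H}_t(\upsilon)$ for a.a.\ $t$, which is the natural analogue of \eqref{FNOC-w}. I would state this explicitly to avoid the (false) impression that the minimum is taken against $\mu[\bar u]$.

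The remaining verifications are the same routine facts used implicitly in Proposition~\ref{NOC-M}: that \eqref{increment-mu} is valid for the pair $(\bar u, u)$ — guaranteed under $(A_r)$ by the derivation in Section~\ref{sec:main}, since $(A_r)$ ensures a strongly unique strong solution and a $C^{1,2}$ (or $W^{1,2}_{\mathrm p}$) solution $\bar p$ of \eqref{PDE-back} with the continuous gradient needed to define $\bar{\mathrm H}$; that $t\mapsto \mathbb E^{\mu_t}\bar{\mathrm H}_t(\upsilon)$ is measurable so the integral and the a.e.\ argument make sense; and that the existence of a measurable $u$ realizing the pointwise minimum in \eqref{u-min} follows, as in the footnote to Proposition~\ref{NOC-M}, from continuity of $\bar{\mathrm H}$ and a measurable-selection theorem. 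None of these is a genuine obstacle.

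The only place that requires a moment's care — and hence what I would call the ``main obstacle'', though it is mild — is the direction of the argument: \eqref{u-min} is a hypothesis on $u$, and we are deducing a property of $\bar u$, so one must make sure the non-positivity of the integrand is used in the correct order ($u$ beats $\bar u$ pointwise against $\mu[u]$, giving $\Delta\mathcal I\le 0$; then optimality of $\bar u$ closes the loop to equality). The argument is not symmetric in $u$ and $\bar u$ because the underlying law is $\mu[u]$ throughout, so one cannot simply swap their roles; writing the chain of (in)equalities carefully, as above, settles it.
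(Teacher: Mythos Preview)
Your proof is correct and follows exactly the route the paper indicates (``proved similar to Proposition~\ref{NOC-M}''): plug the minimizing $u$ into the exact increment formula \eqref{increment-mu}, obtain $\Delta\mathcal I\le 0$, use optimality of $\bar u$ to force equality, and deduce that the non-positive integrand vanishes a.e. Your explicit remark that the measure in the conclusion is $\mu[u]$ rather than $\mu[\bar u]$ is precisely the intended reading, mirroring the appearance of $X_t[w]$ (not $X_t[\bar w]$) in \eqref{FNOC-w}.
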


Note that \eqref{u-min} is, in fact, a type of operator equation on $\bm U_r$. In this concise paper, we shall refrain from rigorously addressing the existence of its solution, which is not straightforward and can potentially be proved, for instance, by applying Kakutani-Fan's fixed-point theorem. Instead, let us explore a constructive approach to (approximately) solving \eqref{u-min} via the feedback principle: let $\nu \in \mathcal{P}(\mathbb{R}^n)$, and $\bar v_t[\nu]$ be a solution to the 
problem
\begin{align}\label{fb}
\mathbb E^{\nu}\bar {\rm H}_t\left(\bar v_t[\nu]\right) =\min_{\upsilon \in U} \mathbb E^{\nu}\bar {\rm H}_t\left(\upsilon\right)\mbox{ for a.a. }t \in I.
\end{align}
The mapping $\bar v\colon (t, \nu) \mapsto \bar v_t[\nu]$ acts as a \emph{feedback control} of \eqref{PDE}, or a \emph{law-feedback control} of \eqref{SDE}. Substituting $\bar v$ into \eqref{PDE} results in a \emph{nonlocal} FPK equation. Assuming that the latter possesses a unique solution $\hat{\mu}=\hat\mu[\bar v]$, and setting $u(t) \doteq \bar v_t[\hat{\mu}_t]$, we obtain a new control $u \in \bm U_r$ with the property \eqref{u-min} followed by the desired inequality $\Delta \mathcal{I} \leq 0$. Note that, in general, $\bar v[\cdot]$ is discontinuous. A solution to the corresponding backfed PDE can be designed by the Krasovskii-Subboting sampling method in the spirit of \cite{SChPP-2022}.  

An iterative implementation of this idea is outlined in Algorithm~\ref{alg2}. The convergence analysis can be elaborated similarly to \cite[Appendix~B]{SPP-2023}.
\begin{algorithm}
\caption{Optimal robust control}
\label{alg2}
\KwData{$\bar u \in \bm U_r$ (initial guess), $\varepsilon>0$ (tolerance)}
\KwResult{$\{u^k\}_{k \geq 0} \subset \bm U_r$ such that $I[u^{k+1}] < I[u^{k}]$}
$k \gets 0$;
$u^0 \gets \bar u$\;
\Repeat{$I[u^{k-1}] - I[u^{k}] < \varepsilon$}{
$p^{k} \gets p[u^k]$; $v^k_s[\nu]\in \arg\min \mathbb E^{\nu}{\rm H}^k_s$\; $\mu^{k+1} \gets \hat \mu[v^k]$;
$u^{k+1} \gets v^k[\mu^{k+1}]$;
$k \gets k+1$\;
  }
\end{algorithm}

In conclusion, we emphasize again that our construction of law-feedback control relies solely on statistics derived from preliminary experiments, accessible through the model \eqref{SDE} (provided, in particular, by a solution to the PDE \eqref{PDE-back}). It does not utilize any feedback information pertaining to a particular sample path, chosen by the stochastic process in the course of practical implementation.

\subsection{Local Optimality. Geometric Intuition. Relation to PMP}\label{sec:geometric}

For fixed $\omega \in \Omega$, the parametrization 
\(
s \mapsto \gamma_s(\omega)
\), defined in \eqref{gamma},
represents, a.s., a curve in the set 
\[
\mathcal R_T(\omega)\doteq \{X_T[\bm u](\omega)\colon \bm u \in \bm U\},
\] of points, reachable by all $\omega$-paths of the controlled SDE \eqref{SDE} (``sample-reachable'' set). Certainly, since $\bm u \diamond_s \bar{\bm u} \in \bm U$, it is evident that, a.s., $\gamma_s(\omega) \in \mathcal R_T(\omega)$ for any $s \in I$, while $\gamma_0(\omega) = \bar X_T(\omega)$ and $\gamma_T(\omega) = X_T(\omega)$ by construction. 

Building on this observation, we introduce an original concept of local minimum for problem $(SP)$ that applies to both types $\bm U_r$ and $\bm U_M$ of control inputs. This concept does not rely on any norm on the spaces of state trajectories and/or control functions, and consequently, proves to be stronger than the usual ``strong'' or Pontryagin's minimum associated with the standard class of needle-shaped variations.
\begin{definition}
A process 
$\gamma\colon 
I \times \Omega \to \R^n$, is said to be a \emph{curve of expected monotone decrease from $x \in \R^n$ w.r.t. $\ell$} if the following holds: i) $\gamma$ is $\mathcal F_T^{W,X_0}$-measurable in $\omega$ for all $s \in I$, ii) $\gamma$ is a.s. continuous in $s$, iii) $\gamma_0=x$ a.s., and iv) $s \mapsto \mathbb E\ell(\gamma_s)$ is strictly decreasing on $I$. 

We call a control $\bar u \in \bm U$ \emph{locally optimal} for $(SP)$ if there are \emph{no} curves $\gamma$ of the expected monotone decrease from 
$\bar X_T$ such that $\gamma_s \in \mathcal R_T$ a.s., for all $s \in I$.       
\end{definition}

Essentially, our approach boils down to suggesting a somewhat simplest class of a.s. Lipschitz curves on the sample-reachable set that perform a \emph{guaranteed} ``expected non-ascent'' from $\bar X_T$. In the non-stochastic framework, there are toy examples \cite{SChPP-2022} showing that necessary conditions, relying on this idea, can discard non-optimal PMP extrema. To construct such examples for the stochastic problem would be an interesting challenge. 

\subsection{Bolza Problem. Nonlinear Functionals}\label{sec:C-gen}

Generalizing the presented results to the Bolza problem is straightforward: by setting $\bar p$ in the form \eqref{FK} with $\bm u=\bar{\bm u}$ and $q=R$, one replicates the logic of Section~\ref{sec:main} with the corresponding re-definition of the maps $\bar{\rm H}$ and $\bar H$.

In fact, it is possible to extend \eqref{increment-mu} to a wide class of \emph{nonlinear} cost functionals, $\bm \ell, \bm R_s \colon \mathcal P_c(\R^n) \to \R$, possessing a differentiable \emph{intrinsic derivative} (the derivative along vector fields) \cite[Definition~2.2.2]{CardMaster2019}. For the case of random ODEs, such an extension is performed in \cite{CPSA2023}. 


In practical scenarios featuring affine dependence on $\bm u$, geometric control bounds, $\bm u_t \in U$ a.e. $t \in I$, are often omitted ($U$ is assumed to be a large-radius ball). Instead, $\bm u$ is penalized by an ``energetic'' running cost functional. For $\bm u =w$ this penalty is defined either through an auxiliary function $\mathrm R \equiv \frac{\alpha}{2}|\upsilon|^2$ as in Sec.~\ref{sec-hypo}, or directly as $\frac{\alpha}{2}\int_I \|w_t\|^2_{L_2} \d t$, $\alpha >0$. In the latter case, if the initial distribution $\vartheta$ is absolutely continuous, a comparison control $w$ is defined via the \emph{unique} minimizer $w_t[\rho](x)$ of $\upsilon \mapsto \frac{\alpha}{2}|\upsilon|^2 + \bar{\mathrm{H}}_t(x, \upsilon) \rho(x)$, which depends on the density $\rho$ of the probability law.





\section{NUMERIC EXPERIMENT}\label{sec:numeri}

To exemplify our approach, we tackle a stochastic version of an optimal control problem \cite[Sec.~III.A]{SPP-2023} for the Theta model, a simple model capturing bursting behavior of excitable neurons. The model features a phase variable \(x\in \mathbb S^1 \) and an excitability parameter \( \eta\in \mathbb{R}\), governed by the SDE \eqref{SDE}, with
\(
f[\bm u](x)=(1-\cos x) +(1+\cos x)\left(\eta+\bm u\right)\), and \(\sigma \equiv \sqrt{2\beta}\). To highlight novel aspects in the stochastic setting, we formulate the problem using Markovian strategies $\bm u = w$, representing external excitations.  Our objective is to steer the phase of a neuron from random initial data, characterized by a joint probability distribution in $(x, \eta)$, to a specified value $\check x \in \mathbb S^1$ at a given moment $T$,  while optimizing resource utilization: $\ell(x)=  1 - \cos(x-\check x)$, and the running cost functional is $\frac{\alpha}{2}\int_I\|w_t\|^2_{L_2} \d t$. The problem is cast into the form $(DP)$ and solved by Algorithm~\ref{alg1}. 

For our experimental setup, we choose $T=6$,  $\alpha=1$, $\beta = 0.5$, $\check x =\pi$, and $u^0 \equiv 0$. The PDEs \eqref{PDE} and \eqref{PDE-back} are solved by the standard pseudospectral method, and time integration is performed using the 4$^\text{th}$-order Runge-Kutta scheme. Note that the backfed equation \eqref{PDE} turns out to be nonlocal. 

The probability density function (PDF) of the initial distribution $\vartheta$ is depicted in Fig.~\ref{fig:mu} (upper panel). The minimization history spans 3 iterations: $\mathcal I \approx 11.46\to 2.88 \to 2.4 \to 2.31$. Notably, rapid convergence towards a meaningful solution with a reasonable control cost is evident. Fig.~\ref{fig:mu} (bottom panel) illustrates the resulting terminal PDF. 

\begin{figure}[!ht]
  \centering
  \vspace{0.2cm}\includegraphics[width=0.43\textwidth]{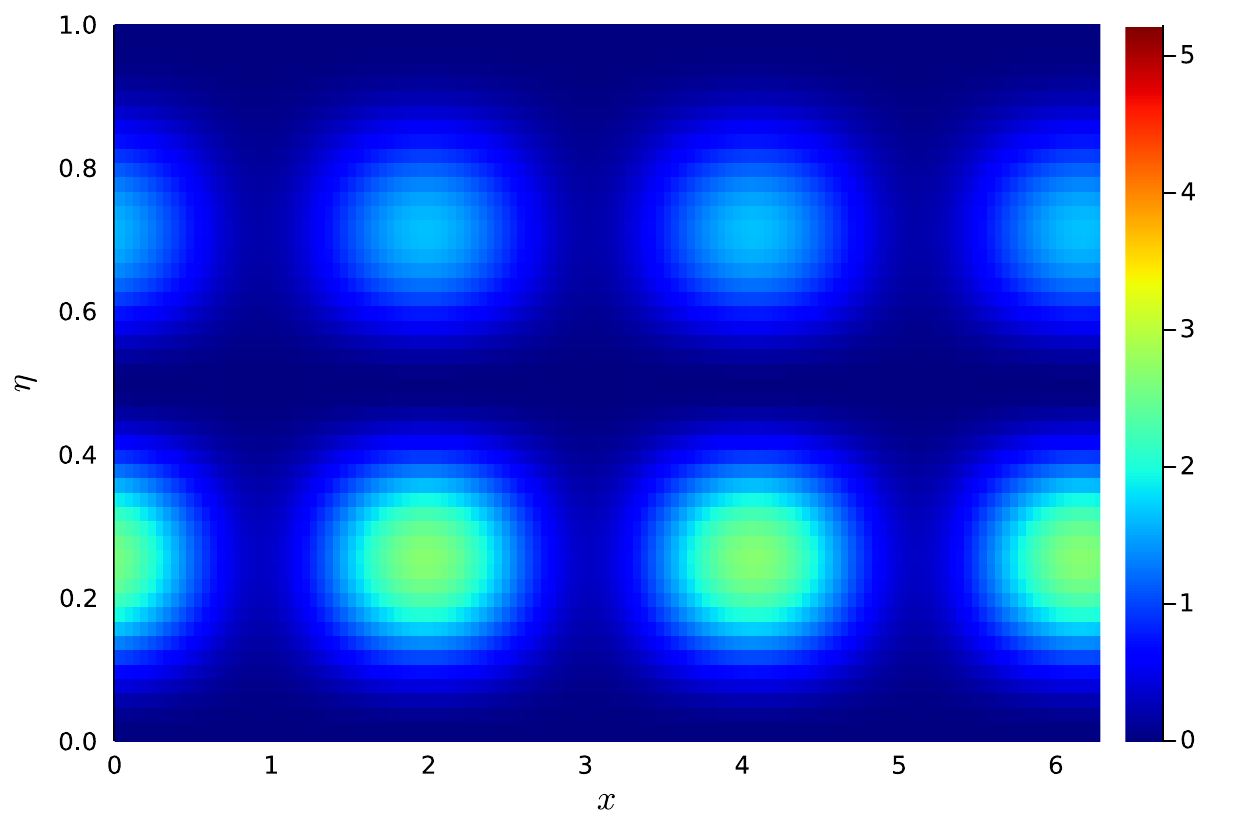}\vspace{-0.34cm}
  \includegraphics[width=0.43\textwidth]{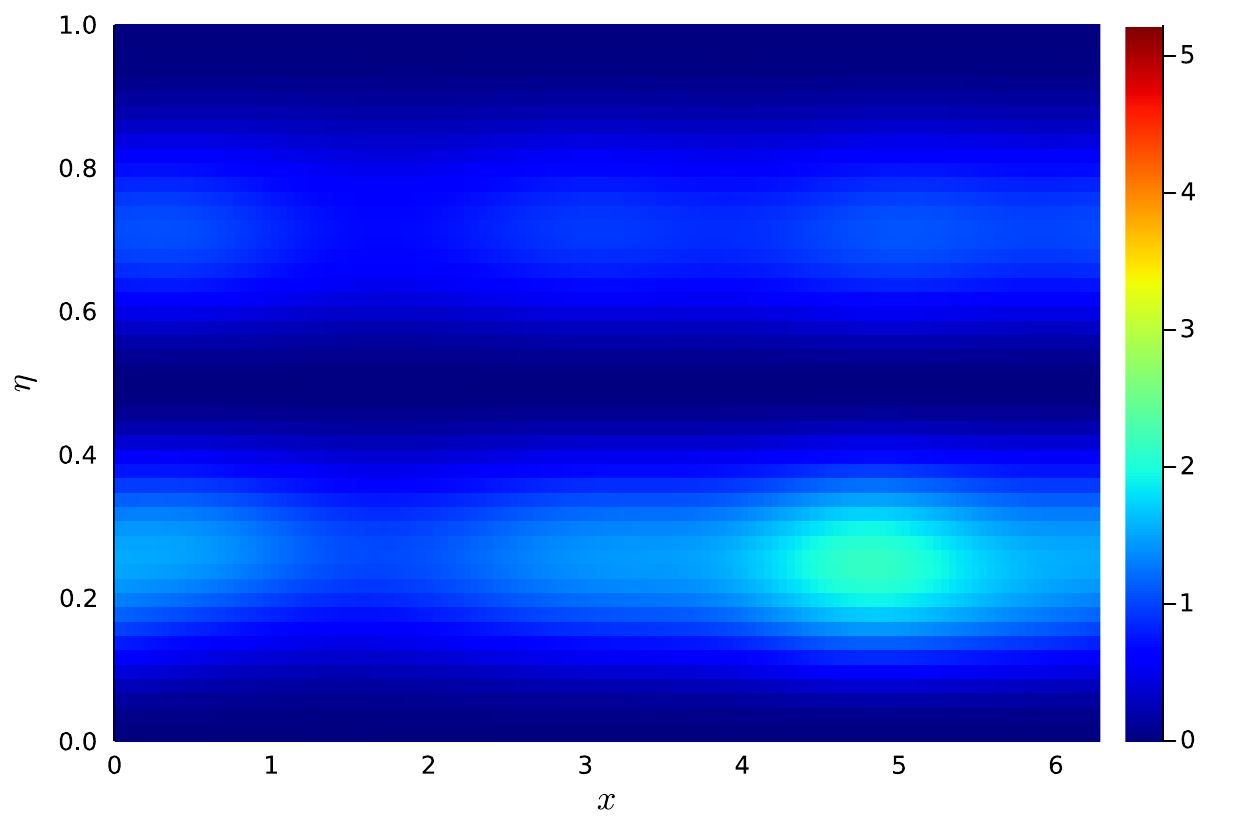}\vspace{-0.34cm}
  \includegraphics[width=0.43\textwidth]{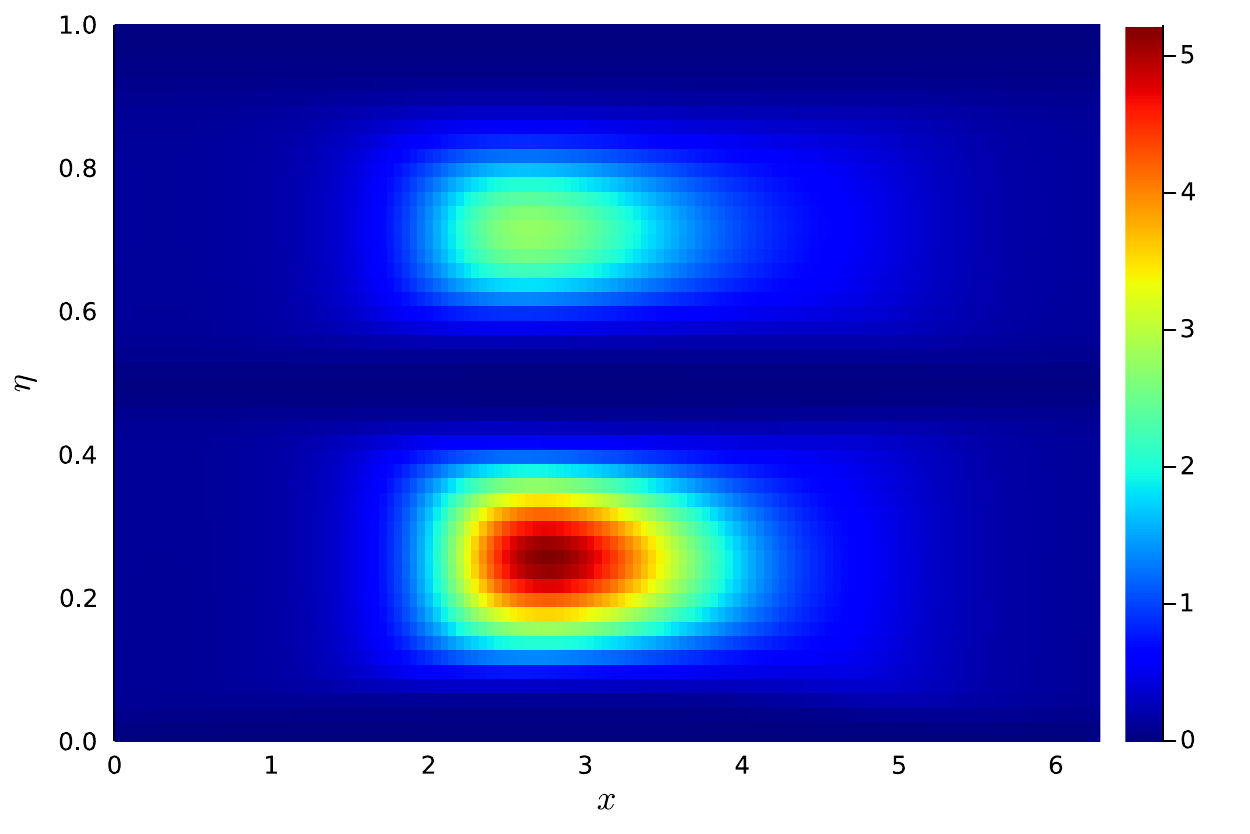}\vspace{-0.34cm}
  \caption{Theta model: snapshots of the ``optimal'' PDF at time moments $t=0; 0.5; 6$ (from top to bottom).}
  \label{fig:mu}
\end{figure}










\bibliographystyle{IEEEtran}
\bibliography{IEEEabrv,Staritsyn-full}


\end{document}